\documentclass[a4paper,11pt]{article}
\usepackage[T1]{fontenc}
\usepackage{rotating}

\usepackage[title]{appendix}
\usepackage{url}
\usepackage{upgreek}
\usepackage{amsmath} 
\usepackage{amsthm}
\usepackage{amsfonts}
\usepackage{mathtools}
\usepackage{amssymb}
\usepackage{enumitem}
\usepackage{gensymb}

\usepackage{color}

\usepackage{algorithm}
\usepackage{algpseudocode}

\def\bv{\bar{v}}

\newcommand{\ZZ}{\mathbb{Z}}

\DeclareMathOperator{\Aut}{Aut}

\DeclareMathOperator{\V}{V}
\DeclareMathOperator{\E}{E}
\DeclareMathOperator{\LCF}{LCF}

\newcommand{\cR}{{\mathcal{R}}}
\newcommand{\cX}{{\mathcal{X}}}

\usepackage{graphicx}

\usepackage{tikz}
\usetikzlibrary{calc}
\usetikzlibrary{backgrounds}
\usetikzlibrary{intersections}
\usetikzlibrary{decorations.markings}
\usetikzlibrary{arrows.meta}
\usetikzlibrary{decorations.pathreplacing}

\usepackage{authblk}
\usepackage{subcaption}
 
\usepackage[text={170mm,254mm}]{geometry}

\theoremstyle{plain}
\newtheorem{theorem}{Theorem}
\newtheorem{lemma}[theorem]{Lemma}

\theoremstyle{definition}
\newtheorem{definition}{Definition}
\newtheorem{problem}[theorem]{Problem}

\usepackage{ifxetex,ifluatex}
\if\ifxetex T\else\ifluatex T\else F\fi\fi T%
  \usepackage{fontspec}
\else
  \usepackage[T1]{fontenc}
  \usepackage[utf8]{inputenc}
  \usepackage{lmodern}
\fi

\usepackage{hyperref}

\title{Computing the Hamiltonian compression factors of cubic graphs}

\author[1,2]{Marston Conder\thanks{Supported by New Zealand's Marsden Fund, grant number UOA2320}}
\author[3]{Gregor Poto\v{c}nik}
\author[4,5]{Primo\v{z} Poto\v{c}nik\thanks{Supported by Slovenian Research and Innovation Agency, project numbers P1-0294 and J1-4351.}}

\affil[1]{University of Auckland, Department of Mathematics, Auckland, New Zealand}
\affil[2]{University of Primorska, Koper, Slovenia}
\affil[3]{Gimnazija Vi\v{c}, Ljubljana, Slovenia}
\affil[4]{University of Ljubljana, Faculty of Mathematics and Physics, Ljubljana, Slovenia}
\affil[5]{Institute of Mathematics, Physics and Mechanics, Ljubljana, Slovenia}

\begin{document}

\maketitle

\begin{abstract}
We present an algorithm for computing Hamiltonian cycles that are invariant under a graph automorphism acting on them as a rotation.
We also present an application of this algorithm for computing the Hamiltonian compression factor of a graph, that is, the largest order of an automorphism preserving some
Hamiltonian cycle and acting on it as a rotation. As an example, we compute the Hamiltonian compression factors of all cubic edge-transitive graphs on up to $10{,}000$ vertices,
with the exception of two graphs, which are not Hamiltonian, and $98$ graphs (the smallest having $2304$ vertices) for which only a lower bound for the compression factor is given. As a byproduct, we obtain shortest LCF codes for each of these graphs (except for the two non-Hamiltonian ones; for the $98$ unresolved graphs, the codes obtained are the shortest among those we found).
\end{abstract}

\section{Introduction}

Finding a Hamiltonian cycle in a given graph (that is, a cycle which visits every vertex of the graph exactly once) is a well-known computationally difficult problem.
Its decision analogue (determining whether a graph has a Hamiltonian cycle) is NP-complete, and remains such even when restricted to several important subclasses,
such as graphs of maximum degree $3$, bipartite graphs, planar graphs etc. Several exponential-time algorithms that enumerate all Hamiltonian cycles of a graph have
been presented in the literature, both for general graphs (see, for example, \cite{Rub}) as well as for {\em subcubic graphs} (that is, those of maximum degree 
at most $3$)  \cite{Epp}. An implementation of an algorithm ({\tt cubeham.c}) that enumerates all Hamiltonian cycles for subcubic graphs,
as well as an implementation ({\tt hamheuristic.c}) of a heuristic algorithm that tries to find a Hamiltonian cycle in a general graph can be found in the {\tt nauty} package \cite{nauty}.

When considering graphs possessing non-trivial automorphisms, especially those with many automorphisms, rather than just ask for existence of a Hamiltonian cycle,
one might perhaps want to find a Hamiltonian cycle which is invariant under a non-trivial automorphism of a graph.
This line of inquiry was suggested already by Coxeter and Frucht in the 1970s (see \cite{Frucht} for explanation), and has resurfaced recently in \cite{HamComp,KMR},
where the notions of a {\em $k$-symmetric Hamiltonian cycle} and the {\em Hamiltonian compression factor of a graph} were coined.  

\begin{definition}
\label{def:main}
Let $\Gamma$ be a finite simple graph on $n$ vertices and let $C=v_0v_1\ldots v_{n-1}$ be a Hamiltonian cycle of $\Gamma$. 
If there exists an automorphism $g$ of $\Gamma$ such that ${v_i}^g = v_{i+\frac{n}{k}}$ for some positive divisor $k$ of $n$ and for all indices $i$ (computed modulo $n$),
then $C$ is said to be {\em $k$-symmetric} and $g$ is called a {\em $k$-fold rotation} of $C$.
The largest integer $k$ for which $C$ is $k$-symmetric is called the {\em compression factor of $C$} and denoted by $\kappa(\Gamma,C)$,
and the largest integer $k$ for which there exists a $k$-symmetric Hamiltonian cycle of  $\Gamma$ is called the {\em (Hamiltonian) compression factor of $\Gamma$} and denoted by $\kappa(\Gamma)$.
If $\Gamma$ has no Hamiltonian cycles, then $\kappa(\Gamma)=0$. 
\end{definition}

For convenience, let us call a $\kappa(\Gamma)$-fold rotation of a Hamiltonian cycle $C$ a {\em compression-optimal automorphism}, and let us call $C$ 
a {\em compression-optimal Hamiltonian cycle}.

A permutation $g$ on a set $\Omega$ is said to be {\em semiregular} if
no non-identity element of $\langle g \rangle$ fixes a point in $\Omega$. Note that by this 
definition the trivial permutation is semiregular, which might differ from the definition of semiregularity
elsewhere, where the trivial permutation is explicitly excluded. 
Note also that a rotation of a Hamiltonian cycle $C$ of a graph $\Gamma$
preserves the edge-set $\E(C)$ and acts on $\V(\Gamma)$ as well as on $\E(C)$ as a semiregular permutation.
Conversely, every automorphism of $\Gamma$ that preserves a Hamiltonian cycle $C$ and acts as a semiregular permutation
on $\E(C)$ and on $\V(\Gamma)$ is a rotation of $C$. (We shall
call the automorphisms of $\Gamma$ that act semiregularly on $V(\Gamma)$ {\em semiregular automorphisms} of $\Gamma$.)
Hence we obtain the following lemma:

\begin{lemma}
\label{lem:init}
Let $C$ be a Hamiltonian cycle of a graph $\Gamma$ of order $n$. Then the compression factor of $C$ is the maximum order of
an automorphism of $\Gamma$ that preserves $\E(C)$ and acts semiregularly both on $\V(\Gamma)$ and on $\E(C)$.
\end{lemma}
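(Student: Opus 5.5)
The plan is to prove the two facts stated just before the lemma, that every rotation of $C$ is such an automorphism and conversely, and then to show that the order of a $k$-fold rotation is exactly $k$. Together these identify the set of values $k$ in Definition~\ref{def:main} with the set of orders of automorphisms preserving $\E(C)$ and acting semiregularly on $\V(\Gamma)$ and on $\E(C)$. The maximum on the left is $\kappa(\Gamma,C)$ by definition, so the lemma follows. Throughout, write $C=v_0v_1\ldots v_{n-1}$, and let $e_i$ be the edge $v_iv_{i+1}$, with indices taken modulo $n$.

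\emph{Forward direction.} Suppose $g$ is a $k$-fold rotation, so $v_i^g=v_{i+n/k}$ for all $i$. Then $e_i^g=e_{i+n/k}$, so $g$ preserves $\E(C)$. Moreover $g^j$ acts on vertex indices and on edge indices as the translation $i\mapsto i+jn/k$ of $\ZZ_n$. This translation is either the identity or fixed-point-free, so $g$ is semiregular on $\V(\Gamma)$ and on $\E(C)$. Its order is the order of $n/k$ in $\ZZ_n$, which is $k$. Hence $\kappa(\Gamma,C)$ is at most the maximum in the lemma.

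\emph{Converse.} Let $g$ be an automorphism of order $m$ that preserves $\E(C)$ and is semiregular on $\V(\Gamma)$ and on $\E(C)$. Since $\E(C)$ is a cycle on all $n$ vertices, $g$ induces an automorphism of the $n$-cycle $C$, that is, an element of the dihedral group $D_n$. So either $v_i^g=v_{i+s}$ for all $i$ (a rotation of $C$), or $v_i^g=v_{s-i}$ for all $i$ (a reflection).

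The main point is to rule out reflections using semiregularity; this is the step that needs care. If $g$ is a reflection of $C$, then $g^2=1$, so $g$ itself must fix no vertex and no edge of $C$ (unless $g$ is trivial, which is a rotation anyway). A nontrivial reflection of an $n$-cycle always fixes either a vertex or an edge: solve $i\equiv s-i$ for a fixed vertex, or $\{i,i+1\}=\{s-i,s-i-1\}$ for a fixed edge. When $n$ is odd a reflection fixes one vertex and one edge. When $n$ is even it fixes either two vertices or two edges, depending on the parity of $s$. This contradicts semiregularity, so $g$ is a rotation, $v_i^g=v_{i+s}$.

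Then $g$ has order $m=n/\gcd(n,s)$, so $m$ divides $n$. Because $\langle g\rangle=\langle v\mapsto$ shift by $\gcd(n,s)\rangle$, the element $g^t$ generating the same cyclic group with $v_i^{g^t}=v_{i+n/m}$ is a genuine $m$-fold rotation. Even without this step, the shift by $s$ with $\gcd(n,s)=n/m$ already shows that $C$ is $m$-symmetric, via the generator $g^t$ where $ts\equiv n/m \pmod n$. Hence $m\le\kappa(\Gamma,C)$, which gives equality of the two maxima.
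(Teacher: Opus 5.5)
Your proof is correct and takes the same route as the paper, which only asserts, in the remark just before the lemma, that rotations of $C$ are exactly the automorphisms preserving $\E(C)$ and acting semiregularly on $\V(\Gamma)$ and on $\E(C)$. You fill in the details the paper leaves implicit: you rule out reflections of $C$ because each one fixes a vertex or an edge, and you pass to a suitable power $g^t$ to get a literal $m$-fold rotation in the sense of Definition~\ref{def:main}, which the paper glosses over, since a shift by $s\not\equiv \pm n/m \pmod{n}$ is not itself an $m$-fold rotation.
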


As was pointed out in \cite{HamComp}, in the case of cubic (that is, $3$-regular) graphs there is a tight connection between
the compression factor and the so-called LCF notation, introduced by Lederberg and later studied by Coxeter and Frucht \cite{Frucht},
 which assigns to a cubic graph $\Gamma$ with a Hamiltonian cycle $C:=v_0v_1\ldots v_{n-1}$ the sequence 
 $$\LCF(\Gamma,C):= [s_0,s_1,\ldots,s_{n-1}]$$
 of integers $s_i$,  for $- n/2 < s_i \le n/2$, such that  $v_i \sim v_{i+{s_i}}$ 
 for every $i\in \ZZ_n$. (Here and in what follows the indices are computed within the group $\ZZ_n$, that is, modulo $n$, and the symbol `$\sim$'
 represents the adjacency relation in the graph under consideration.) 
Note that the LCF notation determines a cubic graph completely, as the neighbours of a vertex $v_i$ are $v_{i-1}$, $v_{i+1}$ and $v_{i+{s_i}}$.

If the LCF notation $[s_0,s_1,\ldots,s_{n-1}]$ is $k$-periodic for some positive divisor $k$ of $n$ in the sense that $s_{i+n/k} = s_i$ for every $i\in \ZZ_n$,
then the sequence is usually written as $[s_0,s_1,\ldots,s_{n/k-1}]^k$, yielding a `compressed' LCF notation with the `compression factor' being $k$.
The following easy observation therefore justifies the use of the term `Hamiltonian compression factor' of a cubic graph:

\begin{lemma}
\label{lem:LCF}
Let $C$ be a Hamiltonian cycle of a cubic graph $\Gamma$. Then the compression factor $\kappa(\Gamma,C)$ equals the largest integer $k$
such that $\LCF(\Gamma,C)$ is $k$-periodic. In particular, the Hamiltonian compression factor $\kappa(\Gamma)$ equals the largest
integer $k$ such that $\Gamma$ admits a $k$-periodic LCF notation.
\end{lemma}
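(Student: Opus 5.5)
The plan is to show that, for a fixed Hamiltonian cycle $C=v_0v_1\ldots v_{n-1}$ and a positive divisor $k$ of $n$, the cycle $C$ is $k$-symmetric if and only if $\LCF(\Gamma,C)$ is $k$-periodic. Both quantities in the first claim are maxima over the same set of divisors $k$, so they coincide. The second claim then follows by taking the maximum over all Hamiltonian cycles $C$, since every LCF notation of $\Gamma$ is $\LCF(\Gamma,C)$ for some $C$. If $\Gamma$ is not Hamiltonian there is no LCF notation; we read the maximum over the empty set as $0$, in agreement with Definition~\ref{def:main}.

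Write $m=n/k$. For the forward direction, let $g$ be an automorphism with $v_i^g=v_{i+m}$ for all $i$. The third neighbour of $v_i$, namely $v_{i+s_i}$, is mapped by $g$ to a neighbour of $v_{i+m}$ other than $v_{i+m\pm1}$. That vertex is $v_{i+s_i+m}$, so it must equal the third neighbour $v_{i+m+s_{i+m}}$. Hence $s_{i+m}\equiv s_i \pmod n$. Both values lie in the interval $(-n/2,n/2]$, which contains exactly one representative of each residue class, so $s_{i+m}=s_i$.

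For the converse, assume $s_{i+m}=s_i$ for all $i$, and define the permutation $g\colon v_i\mapsto v_{i+m}$ of $\V(\Gamma)$. It maps cycle edges $v_iv_{i+1}$ to cycle edges. It maps each chord $v_iv_{i+s_i}$ to $v_{i+m}v_{i+m+s_i}=v_{i+m}v_{(i+m)+s_{i+m}}$, which is again an edge of $\Gamma$. Since $\Gamma$ is cubic, every edge is either a cycle edge or a chord of this form, so $g$ maps edges to edges. As a bijection on a finite edge set, $g$ is therefore an automorphism, and it is a $k$-fold rotation of $C$.

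There is no serious obstacle here. The only points needing care are the normalisation of the $s_i$ to $(-n/2,n/2]$, which turns the congruence into an equality, and the case of a chord with $s_i=n/2$, which the argument above handles without change.
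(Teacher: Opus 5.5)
Your proof is correct: the paper states this lemma as an easy observation without writing out a proof. Your argument is precisely the routine verification it leaves implicit, namely that $C$ is $k$-symmetric exactly when $\LCF(\Gamma,C)$ is $k$-periodic, using the third-neighbour comparison with the normalisation $-n/2<s_i\le n/2$ in one direction and checking that $v_i\mapsto v_{i+n/k}$ maps cycle edges and chords to edges in the other, with the non-Hamiltonian case handled by the convention $\kappa(\Gamma)=0$ of Definition~\ref{def:main}.
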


The main purpose of this paper is to address the following problem:

\begin{problem}
\label{prob:main}
Find a practical algorithm which takes a (finite, connected) cubic graph $\Gamma$ as an input and returns the Hamiltonian compression factor $\kappa(\Gamma)$
together with a $\kappa(\Gamma)$-periodic LCF notation for $\Gamma$.
\end{problem}

Note that this problem is at least as hard as the Hamiltonian cycle existence decision problem, and indeed, in the case of graphs that are not Hamiltonian or
that only possess Hamiltonian cycles with no non-trivial rotations  (that is, for graphs with Hamiltonian compression factor  $1$),
our solution of this problem reduces to the use of existing algorithms for finding Hamiltonian cycles in cubic graphs.
For graphs with large Hamiltonian compression, due to the extensive use of the symmetry properties of the graph,
our algorithm not only finds an `optimal' Hamiltonian cycle, but also finds it faster than general algorithms (not using symmetry) would.
In this sense, the algorithm proposed in this paper can be viewed as an improvement of existing algorithms for finding Hamiltonian cycles.

Our approach is developed in Section~\ref{sec:algo}, and the resulting algorithm is presented and discussed in Section~\ref{sec:pseudo}. As an application, we compute the Hamiltonian compression factor of (almost all)
cubic edge-transitive graphs on at most $10{,}000$ vertices in Section~\ref{sec:CET}. The census of these graphs was first compiled by the first-named author of this paper in 2011 for the arc-transitive case, and in 2018 (jointly with the last-named author)  for the semisymmetric case. 
The censuses first appeared at \cite{ConderWebCAT,ConderWebCSS}, but are  now available for download also from \cite{FosterGraphSym} and \cite{ZenodoLCF}.

\section{The cycle lifting lemma}
\label{sec:algo}

A naive approach to determining a Hamiltonian compression factor of a graph would first try to
find all Hamiltonian cycles of a graph, for each of them compute all rotations,  and then choose one
with a rotation of largest order. The main drawback of this approach is of course the algorithmic complexity
 of finding all Hamiltonian cycles of a given graph.

Instead of this naive approach, we shall adopt the strategy mentioned in \cite[Section 1.3]{HamComp} and
 use the fact that existing algorithms of computational group theory
(and their implementations in several computer algebra systems)
allow us to determine the automorphism group of a given graph rather quickly.
Since rotations of Hamiltonian cycles all act semiregularly on the vertices (see Lemma~\ref{lem:init}),
we may disregard automorphisms that are not semiregular.
Also since a conjugate $g^h = h^{-1} g h$ of a rotation $g$ of a Hamiltonian cycle $C$
is a rotation of the Hamiltonian cycle $C^h$,
it suffices to consider one automorphism from each conjugacy class of semiregular automorphisms.
Moreover, as clearly follows from Lemma~\ref{lem:main} below, given a semiregular automorphism $g$ of order $k$, our algorithm
can check (without any additional cost) not only whether $g$  but also whether any other generator of the cyclic group $\langle g \rangle$ 
acts as a $k$-fold rotation of some Hamiltonian cycle. 
The first step of our algorithm is as follows:
\medskip

\noindent
{\bf Step 1:} For a graph $\Gamma$,
 find a complete set of representatives of $\Aut(\Gamma)$-conjugacy classes of semiregular cyclic subgroups of $\Aut(\Gamma)$ (including the trivial subgroups)
 and let $\cR$ be a set containing one generator from each of these representative subgroups.
The set $\cR$ will then be called a {\em reduced set of candidate rotations}, for short.
 (Note that by definition of semiregular automorphisms, any such set of candidate rotations always contains the trivial automorphism.)
\medskip

There are two possible computational bottlenecks in Step~1. It is well known that the determination
of the automorphism group of a graph (say, by computing its generating set) is at worst quasi-polynomial 
in the number of vertices for general graphs, and that it can be determined in polynomial-time for the case of graphs 
of bounded valence. For practical purposes, of course, these theoretical time complexities are not as important 
as how implementations of different algorithms behave in the relevant range. Fortunately, methods that are currently available
in several computer algebra systems (such as {\sc{Gap} \cite{Gap}} or {\sc{Magma}~\cite{magma}}) and code libraries (such as {\sc{Nauty}~\cite{nauty}})
perform extremely well for graphs with up to several thousand vertices.

A more problematic part of Step 1 is the computation of the conjugacy classes of cyclic subgroups, and the reason lies
in the fact that whatever the time complexity of the conjugacy classes problem is (and it is well known to be polynomial),
it will be a function of the order of the automorphism group rather than the order of the graph. It is known
that the order of the automorphism group of a graph (even when restricted to a nice class,  such as connected vertex-transitive cubic graphs)
can grow exponentially with the order of the graph. This part of Step~1 might therefore prove to be problematic for graphs
with very large automorphism groups. Fortunately, for many classes of graphs, and in particular, by a celebrated theorem of Tutte~\cite{tutte}
on cubic symmetric graphs, and the analogous theorem of Goldschmidt~\cite{Gold80} for cubic semisymmetric  graphs, the order of the automorphism group of a
connected cubic edge-transitive graph is bounded by a linear function of the order of the graph; see~\cite{TAMS,PSV} for a detailed discussion
on the problem of bounding the order of the automorphism group in highly symmetric graphs.

In the second step, it remains to iterate through the reduced set of candidate rotations:
\medskip

\noindent
{\bf Step 2:} For a reduced set $\cR$ of candidate rotations of $\Gamma$,
iterate through the elements $g$ of $\cR$ (in decreasing order of $|g|$),
and at each step check whether there exists a Hamiltonian cycle  upon which a generator of $\langle g\rangle$  acts as a rotation.
If such a Hamiltonian cycle exists, exit and return the order of $g$ as the Hamiltonian compression factor of the graph.
If the iteration terminates without finding any appropriate Hamiltonian cycles, then $\Gamma$ is not Hamiltonian (since
in the last step of the iteration $g$ is the trivial automorphism, which clearly acts as a rotation of every
Hamiltonian cycle of $\Gamma)$.
\medskip

Clearly, the core of the problem of Step~2 is deciding whether for a given semiregular automorphism $g$ of $\Gamma$ there exists
a Hamiltonian cycle upon which $g$ acts as a rotation. If $g$ happens to be trivial, then this is equivalent to deciding whether $\Gamma$
is Hamiltonian, which is known to be an {\rm NP}-complete decision problem. But, as we will now show, if $g$ has a relatively small
number of orbits (and hence has a relatively large order), then the problem becomes considerably easier. In particular, for graphs
with relatively large Hamiltonian compression factor $\kappa(\Gamma)$, Step~2 can be carried out efficiently. On the other hand, 
if $\kappa(\Gamma)$ is small, then our algorithm will yield a lower bound on $\kappa(\Gamma)$ but might fail to determine the exact value.

Let us start a detailed explanation of our approach to Step~2 by introducing some notation pertaining to the quotients of graphs and the `cycle lifting technique', whose importance for finding Hamiltonian cycles is well known (see, for example, \cite[Section 1]{KMR}).

 Let $g$ be a semiregular automorphism of a graph $\Gamma$ having $m$ orbits of size $k$,
 and let $T=\{\alpha_0^0,\alpha_0^1, \ldots, \alpha_0^{m-1}\}$ be an ordered set, containing precisely one representative 
 of each orbit of $\langle g \rangle$. We shall refer to such a set $T$ as a {\em transversal} for $g$. 
 By letting $\alpha_r^i = (\alpha_0^i)^{g^r}$ for every $r \in \ZZ$ and $i\in \{0,1,\ldots,m-1\}$,
we can write $g$ as a product of disjoint cycles as follows:
 \begin{equation}
\label{eq:g}
g \> = \> (\alpha_0^0\, \alpha_1^0\, \ldots \, \alpha_{k-1}^0)\,(\alpha_0^1\, \alpha_1^1\, \ldots \, \alpha_{k-1}^1) \, \ldots \, (\alpha_0^{m-1}\, \alpha_1^{m-1}\, \ldots \, \alpha_{k-1}^{m-1}).
\end{equation}
For convenience, we shall consider each subscript $t\in \{0,\ldots, k-1\}$ as an element of the ring $\ZZ_k$,
and each superscript $i\in \{0,1,\ldots, m-1\}$ as an element of $\ZZ_m$, and then
 always compute with subscripts and superscripts modulo $k$ and $m$, respectively.

The semiregular automorphism $g$, together with the explicit choice of the transversal $T$, determines 
two mappings $\pi \colon V(\Gamma) \to \ZZ_m$ and $\sigma \colon V(\Gamma) \to \ZZ_k$ defined by
\begin{equation}
\label{eq:pisi}
 \pi(\alpha_t^i) \> =\>  i,\qquad  \sigma(\alpha_t^i) \> =\>  t.
\end{equation}

We can also define the {\em quotient graph} $\Gamma/\langle g \rangle$  as the graph with vertex-set $\ZZ_m$
and two distinct vertices  $i,j \in \ZZ_m$ adjacent in $\Gamma/\langle g \rangle$ if and only if $\alpha_t^i$ is adjacent to $\alpha_r^j$ for some $t,r\in \ZZ_k$.

More generally, for every pair $i,j \in \ZZ_m$, let
\begin{equation}
\label{eq:zeta}
 \zeta(i,j)\> = \> \{s \in \ZZ_k \mid \alpha_0^i \sim \alpha_s^j \},
\end{equation}
and observe that whenever $i\not =j$, the set $\zeta(i,j)$ is non-empty if and only if $i$ is adjacent to $j$ in $\Gamma/\langle g \rangle$.
Observe also that
since $g$ preserves adjacency in $\Gamma$ and since $\alpha_t^i = (\alpha_0^i)^{g^t}$ by definition,
we see that $\alpha_t^i \sim \alpha_r^j$ if and only if $\alpha_0^i \sim \alpha_{r-t}^j$ if and only if $r-t \in \zeta(i,j)$.
This can be expressed in terms of the functions $\pi$ and $\sigma$ as follows:
\begin{equation}
\label{eq:recon}
 u \sim v \quad \Longleftrightarrow \quad \sigma(v)-\sigma(u) \in \zeta(\pi(u),\pi(v)), \qquad\hbox{ for every } \> u,v\in \V(\Gamma).
\end{equation}
This shows that the adjacency relation in $\Gamma$ can be reconstructed from the functions $\pi$, $\sigma$ and $\zeta$.
Observe also that the functions $\pi$, $\sigma$, and $\zeta$ depend solely on the choice of the semiregular element $g$
and the transversal $T$, and of course, the dependence on $T$ is only up to suitable modifications of these functions
arising from the permutation of the superscripts $i \in \ZZ_m$ and cyclic shifts of the subscripts $t$ in $\alpha_t^i$ for each fixed $i$.
We are now ready to state and prove a lemma that will serve as a basis for our computational approach.

\begin{lemma}
\label{lem:main}
Let $\Gamma$ be a connected graph of order $n$, let $g\in \Aut(\Gamma)$ be a
semiregular automorphism of order $k$ with $m$ orbits, written as in (\ref{eq:g}), let $c$ be an integer coprime to $k$,
 and let  the functions $\pi, \sigma$ and $\zeta$ be as in (\ref{eq:pisi}) and (\ref{eq:zeta}).
Suppose that $\Gamma$ possesses a Hamiltonian cycle $C=v_0v_1\ldots v_{n-1}$ such that
$g^c$ is a $k$-fold rotation of $C$, and let $\bv_i = \pi(v_i)$ for every $i\in \ZZ_n$. Then the conditions {\rm (A)} and {\rm (B)} below hold:
\begin{enumerate}
\item[{\rm (A)}]
$m\ge 3$ and $\bv_0\bv_1 \ldots \bv_{m-1}$ is a Hamiltonian cycle in $\Gamma/\langle g \rangle$, or
$m= 2$ and $\bv_0\bv_1$ is an edge of  $\Gamma/\langle g \rangle$, or
$m=1$; 
\item[{\rm (B)}] there exist elements $z_i \in \zeta(\bv_i,\bv_{i+1}),\> i\in \ZZ_m,$ such that
\begin{equation}
\label{eq:sum}
 \sum_{i=0}^{m-1} z_i  \> = \> c.
\end{equation}
\end{enumerate}

Conversely, suppose that $\bv_0, \bv_1, \ldots, \bv_{m-1}$ are vertices of $\Gamma/\langle g \rangle$ such that
the conditions {\rm (A)} and {\rm (B)} above are fulfilled. Then there exists a Hamiltonian cycle $v_0v_1\ldots v_{n-1}$ in $\Gamma$
for which $g^c$ is a $k$-fold rotation and such that $\pi(v_i) = \bv_i$ for every $i\in \{0,\ldots, m-1\}$.
\end{lemma}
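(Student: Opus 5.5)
The plan is to analyse how $g^c$ acts along the cycle $C$, taking the forward direction first. Since $g^c$ is a $k$-fold rotation, we have $v_i^{g^c}=v_{i+m}$ with $n=km$. Because $c$ is coprime to $k$, the groups $\langle g^c\rangle$ and $\langle g\rangle$ coincide and so have the same orbits.

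The first claim is that $v_0,\dots,v_{m-1}$ meet every $\langle g\rangle$-orbit exactly once. Indeed, $v_j=v_{i+qm}=v_i^{g^{cq}}$ for $j\equiv i \pmod m$, so every vertex lies in the orbit of some $v_i$ with $0\le i<m$. There are only $m$ orbits, so these $m$ vertices are pairwise in distinct orbits. Hence $\bv_0,\dots,\bv_{m-1}$ are distinct, and $\bv_{i+m}=\bv_i$.

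Consecutive vertices $v_i,v_{i+1}$ are adjacent, so for $m\ge3$ the closed walk $\bv_0\bv_1\cdots\bv_{m-1}\bv_0$ uses edges of the quotient. Its vertices are distinct, so it is a Hamiltonian cycle of $\Gamma/\langle g\rangle$. The same argument gives the edge for $m=2$, and $m=1$ is trivial. This proves (A).

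For (B), I would set $z_i=\sigma(v_{i+1})-\sigma(v_i)$ for $i=0,\dots,m-1$. By (\ref{eq:recon}) each $z_i\in\zeta(\bv_i,\bv_{i+1})$. The sum telescopes to $\sigma(v_m)-\sigma(v_0)$. Since $v_m=v_0^{g^c}$ and $g$ shifts $\sigma$ by $1$ within an orbit, this difference equals $c$ in $\ZZ_k$. One point needs care: the index $i=m-1$ uses $v_m$, which lies in orbit $\bv_0$, consistent with $\zeta(\bv_{m-1},\bv_0)$.

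For the converse, I pick $z_i$ as in (B) and build the lift explicitly. Put $s_0=0$ and $s_{i+1}=s_i+z_i$ for $0\le i\le m-2$, and set $v_i=\alpha^{\bv_i}_{s_i}$ for $0\le i<m$. Then define $v_{i+qm}=v_i^{g^{cq}}=\alpha^{\bv_i}_{s_i+qc}$.

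I check three things.
\begin{enumerate}
\item Well-definedness modulo $n$: $g^{ck}=1$.
\item Bijectivity: since $c$ is a unit in $\ZZ_k$, $q\mapsto s_i+qc$ is a bijection of $\ZZ_k$, and the $\bv_i$ are distinct. This uses (A) to get distinctness, with $m\le 2$ trivial.
\item Adjacency of $v_j$ and $v_{j+1}$: by $g$-invariance it suffices to check $j<m$. For $j<m-1$ this is (\ref{eq:recon}) with $z_j$. For $j=m-1$, we have $v_m=\alpha^{\bv_0}_{c}$, so the subscript difference is $c-s_{m-1}=c-\sum_{i<m-1}z_i=z_{m-1}\in\zeta(\bv_{m-1},\bv_0)$ by (\ref{eq:sum}).
\end{enumerate}
Then $g^c$ maps $v_i\mapsto v_{i+m}$, so it is a $k$-fold rotation.

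The main obstacle is making sure the walk is a genuine cycle when $n\le 2$, and handling the degenerate cases $m=1,2$, where $\zeta(i,i)$ encodes edges inside an orbit and the cyclic order of the quotient is degenerate. I would treat these by noting that (\ref{eq:recon}) holds uniformly, including for $i=j$, so the same bookkeeping applies verbatim. The only extra work is checking that the resulting closed walk of length $n\ge3$ is simple, which follows from bijectivity.
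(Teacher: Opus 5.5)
Your proof is correct and follows essentially the same route as the paper. For (A) and (B) you use the same periodicity $\pi(v_{i+m})=\pi(v_i)$, $\sigma(v_{i+m})=\sigma(v_i)+c$ and the same telescoping sum; for the converse you build the same lift $v_{i+qm}=\alpha^{\bv_i}_{s_i+qc}$, differing only in that you check adjacency on one period plus the wrap-around edge $v_{m-1}v_m$ via the sum condition in (B), whereas the paper extends the $z_i$ periodically. Your closing concern about $n\le 2$ cannot be resolved inside the proof: for $\Gamma=K_2$, $g$ the swap and $c=1$, conditions (A) and (B) hold but there is no Hamiltonian cycle, so you should simply assume $n\ge 3$, as the paper tacitly does.
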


\begin{proof}
Suppose first that $\Gamma$ possesses a Hamiltonian cycle $C=v_0v_1\ldots v_{n-1}$ for which $g^c$ is a $k$-fold rotation.
Let $\alpha_t^j$, $t\in \ZZ_k$, $j\in \ZZ_m$, be an arbitrary vertex of $\Gamma$, and let $i\in \ZZ_n$ be such that $v_i = \alpha_t^j$.
 Since $g^c$  is a $k$-fold rotation of $C$ and $mk=n$, it follows that 
 $v_i^{g^c} = v_{i+m}$. On the other hand, $v_i^{g^c} = (\alpha_t^j)^{g^c} = \alpha_{t+c}^j$, and thus $\pi(v_{i+m}) =  \pi(v_i^{g^c}) = \pi(\alpha_{t+c}^j) = j = \pi(v_i)$
and $\sigma(v_{i+m}) =  \sigma(v_i^{g^c}) = \sigma(\alpha_{t+c}^j ) =  t+c = \sigma(v_i)+c$. In short,
 \begin{equation}
\label{eq:sigmam}
 \pi(v_{i+m}) = \pi(v_{i})  \quad \hbox{ and } \quad \sigma(v_{i+m}) = \sigma(v_i)+c
\end{equation}
 for every $i\in \ZZ_n$. Since $C$ is a Hamiltonian cycle, it intersects every orbit of $\langle g \rangle$, implying that
 the set $\pi(\V(\Gamma)) = \{\pi(v_i) : i \in \ZZ_n\}$ covers all the elements of $\ZZ_m$. On the other hand,
 by what we just showed, we see that $\ZZ_m = \{\pi(v_i) : i \in \ZZ_n\} = \{\pi(v_i) : i \in \{0,\ldots,m-1\}\} $.
 In particular, the elements $\pi(v_0), \pi(v_1), \ldots, \pi(v_{m-1})$ 
 are pairwise distinct, and  since $v_i$ is adjacent to $v_{i+1}$ in $\Gamma$ for all $i\in \ZZ_n$, this implies that so is
 $\pi(v_i)$ to $\pi(v_{i+1})$
 in $\Gamma/\langle g \rangle$ for every $i\in \ZZ_m$.
 In particular, if $m\ge 2$, then $\pi(v_0)\pi(v_1)$ is an edge of $\Gamma/\langle g \rangle$, and if $m\ge 3$, then $\pi(v_0)\pi(v_1)\ldots \pi(v_{m-1})$
  is a Hamiltonian cycle in $\Gamma/\langle g \rangle$. This proves condition (A).
 
To prove condition (B), let
 $z_i  =   \sigma(v_{i+1}) -\sigma(v_i),  i\in \ZZ_n$,
 and observe that $z_i \in \zeta(\pi(v_i),\pi(v_{i+1}))$, by (\ref{eq:recon}).
By (\ref{eq:sigmam}), we see that  $\sigma(v_m) = \sigma(v_0) + c$, and thus
$$
\sum_{i=0}^{m-1} z_i \> = \> \sigma(v_{m}) - \sigma(v_0) \> = \>  c,
$$
as required.

Let us now
suppose that $\bv_0, \bv_1, \ldots, \bv_{m-1}$ are vertices of $\Gamma/\langle g \rangle$ (recalling that $\V(\Gamma/\langle g \rangle) = \ZZ_m$) such that
the conditions {\rm (A)} and {\rm (B)} above are fulfilled.
For convenience, we shall extend the definition of $\bv_j$ to all integers $j$ by letting $\bv_{rm+i} = \bv_i$ for all integers $r$ and $i\in\{0,1,\ldots,m-1\}$.

By~(B), there exist $z_i \in \zeta(\bv_i,\bv_{i+1})$ for every $i\in \{0,\ldots, m-1\}$ such that $\sum_{i=0}^{m-1} z_i = c$, and $c$ is coprime to $k$.
If we extend the definition of $z_i$ by letting 
$
z_{rm+i} = z_i
$ 
for all integers $r$ and $i\in\{0,\ldots,m-1\}$, then  the convention that $\bv_{rm+i} = \bv_i$ implies that $z_i \in \zeta(\bv_i,\bv_{i+1})$ for all integers $i$.

We can now define the vertices $v_i$, $i\in \ZZ$, by letting
$$
 v_i\> = \> \alpha^{\bv_i}_{s_i}\> \hbox { where } s_0 = 0 \hbox { and }  s_i = z_0 + z_1 + \ldots + z_{i-1}.
$$
We claim that $v_0v_1\ldots v_{n-1}$ is then a Hamiltonian cycle of $\Gamma$ for which $g^c$ is a $k$-fold rotation.
To see that, note first that (\ref{eq:recon}), together with the definition of the vertices $v_i$ and the fact that $s_{i+1} - s_i = z_i$, implies that
$$
v_i \sim v_{i+1} \Leftrightarrow z_i \in \zeta(\bv_i,\bv_{i+1}),
$$
which is fulfilled for all integers $i$, as already established. Moreover, 
$$
 v_{n} =  \alpha^{\bv_n}_{s_n} = \alpha^{\bv_{km}}_{s_{km}} =   \alpha^{\bv_{0}}_{s_0 + kc} =  \alpha^{\bv_{0}}_{s_0}  = v_0,
$$
where we have used the fact that the values of $z_i$ (and thus of $s_i$) are computed in $\ZZ_k$.
It remains to verify that the vertices $v_0, v_1, \ldots, v_{n-1}$ are pairwise distinct. Write any index $i\in \{0,\ldots,n-1\}$ uniquely as $i = rm + i'$ with $r\in \{0,\ldots,k-1\}$ and $i'\in \{0,\ldots,m-1\}$. By the periodicity $\bv_{rm+i'} = \bv_{i'}$ and the relation $s_{rm+i'} = s_{i'} + rc$ (which holds since the sequence $(z_i)_i$ is $m$-periodic and $z_0+z_1+\cdots+z_{m-1}=c$), we have $v_i = \alpha^{\bv_{i'}}_{s_{i'}+rc}$. Suppose $v_i = v_j$ with $j = r'm + j'$. Then $\bv_{i'} = \bv_{j'}$, and since by~(A) the elements $\bv_0, \bv_1, \ldots, \bv_{m-1}$ are pairwise distinct (as vertices of a Hamiltonian cycle in $\Gamma/\langle g\rangle$, or an edge, or a single vertex, depending on $m$), we get $i' = j'$. Then $s_{i'} + rc \equiv s_{i'} + r'c \pmod{k}$, that is, $(r-r')c \equiv 0 \pmod{k}$, and since $\gcd(c,k) = 1$, this forces $r = r'$ and hence $i = j$. This shows that
$v_0v_1\ldots v_{n-1}$ is indeed a Hamiltonian cycle.

Finally, recall that the sequence  $(z_i)_i$ is periodic with period $m$, implying that
$$
 s_{i+m} \> = \> s_i + (z_i + \ldots + z_{i+m-1}) \> = \> s_i + (z_0+\ldots+z_{m-1}) \> = \> s_i + c.
$$
Thus
$$
 v_{i+\frac{n}{k}} = v_{i+m} = \alpha^{\bv_{i+m}}_{s_{i+m}} = \alpha^{\bv_i}_{s_i + c} = (\alpha^{\bv_i}_{s_i})^{g^c} = (v_i)^{g^c},
 $$
 which shows that $g^c$ is a $k$-fold rotation of the Hamiltonian cycle $v_0v_1\ldots v_{n-1}$.
\end{proof}

\section{The algorithm}
\label{sec:pseudo}

The pseudocode in Algorithm~\ref{alg:hcf}, the correctness of which is supported by Lemma~\ref{lem:main}, summarises our approach to the computation of the Hamiltonian compression factor.
The algorithm takes a connected graph $\Gamma$ as an input and returns a triple $(\kappa,C,\rho)$ where $\kappa=\kappa(\Gamma)$ is the Hamiltonian compression factor of $\Gamma$, $C$
is a compression-optimal Hamiltonian cycle and $\rho\in \Aut(\Gamma)$ is a $\kappa$-fold rotation of $C$; in case $\Gamma$ is not Hamiltonian, the algorithm returns $(0,\bot,\bot)$.

\begin{algorithm}[t]
\caption{Computing the Hamiltonian compression factor}
\label{alg:hcf}
\begin{algorithmic}[1]
\Require A finite connected graph $\Gamma$ on $n$ vertices.
\Ensure $\kappa(\Gamma)$, $\kappa(\Gamma)$-symmetric Hamiltonian cycle $C$ (if one exists), and $\kappa(\Gamma)$-fold rotation of $C$.

\State Compute $A=\Aut(\Gamma)$.
\State Let $\mathcal R$ contain one generator from each $A$-conjugacy class of semiregular
cyclic subgroups of $A$.
\State Order $\mathcal R$ by decreasing order of its elements.
\For{$g\in\mathcal R$}
    \State Let $k$ be the order of $g$ and let $m=\frac{n}{k}$.
    \State Choose orbit representatives
    $\alpha_0^0,\alpha_0^1,\ldots,\alpha_0^{m-1}$ for $\langle g\rangle$ and let 
        $\alpha_t^i=(\alpha_0^i)^{g^t}$, for $t\in\ZZ_k, i\in\ZZ_m$.
    \State Compute the quotient graph $Q=\Gamma/\langle g\rangle$ and the sets
    \[
        \zeta(i,j)=\{s\in\ZZ_k:\alpha_0^i\sim \alpha_s^j\}\> \hbox{ for }\> i,j \in \ZZ_m.
    \]
   \If{$m=1$}
     \State Set  $\cX:= \{\bv_0\}$ (the singleton containing the only vertex of $Q$).
   \EndIf
   \If{$m=2$}
        \State Set  $\cX:= \{\bv_0\bv_1\}$ (the singleton containing the only edge of $Q$).
   \EndIf
   \If{$m\ge 3$}
    \State Let $\cX$ be the set of all Hamiltonian cycles of $Q$.
   \EndIf
    \For{each $\bar v_0\bar v_1\ldots\bar v_{m-1}$ in $\cX$}
       \If{exist $z_i \in \zeta(\bv_i,\bv_{i+1})$, such that $c:=\sum_{i=0}^{m-1}z_i$ is coprime to $k$}
          \State Lift  $\bar v_0\bar v_1\ldots\bar v_{m-1}$ using $z_0,\ldots,z_{m-1}$ to obtain a Hamiltonian cycle $C$ of $\Gamma$.    
          \State Set $\rho:=g^c$. 
          \State
          \Return $\bigl(k,C,\rho \bigr)$.
       \EndIf
    \EndFor

\EndFor

\State \Return $(0,\bot,\bot)$.
\end{algorithmic}
\end{algorithm}

Let us now discuss further details and variations of the pseudocode that make the computations faster.
As already discussed above, Lines 1 and 2 rely heavily on the efficiency of the algorithm for computing automorphism group of a graph, and then, for finding
conjugacy classes of the automorphism group. For this task, we have relied on {\sc Magma}, but could have just as easily used {\sc GAP}, or any other
similar computer algebra software. Even though these computations are difficult in general, they did not represent a bottleneck for our test case of cubic edge-transitive graphs.
We concede, however, that this could become an issue for graphs with very large automorphism groups containing many conjugacy classes of semiregular cyclic subgroups.

Computationally, the most problematic part of the algorithm proves to be Line 15, where all Hamiltonian cycles of the quotient graph $Q$ need to be determined.
For small enough graphs $Q$ (that is, for the cases where the potential rotation $g$ has few vertex-orbits), this can be done quickly with a brute-force search,
and can be made even more efficient for subcubic graphs (which was the case in our test case, presented in Section~\ref{sec:CET}). An implementation of a
Hamiltonian cycles determination algorithm is available in {\tt nauty} \cite{nauty}, for example.

No matter how good the implementation is, a deterministic algorithm for finding all Hamiltonian cycles in $Q$ will fail when the graph $Q$ is large.
The first improvement of the algorithm addressing this issue to some extent is to note that not
 all Hamiltonian cycles of $Q$ need to be determined before we start testing them within the loop starting in Line 17 and ending in Line 23.
 Instead, the Hamiltonian cycles in $Q$ can be generated and tested one by one, finishing the process as soon as we find one which passes the test in Line 18. This modification can speed up the computations considerably when $Q$ contains such Hamiltonian cycles, but of course is useless when $Q$ possesses many Hamiltonian cycles none of which passes the test.

Alternatively, instead of a process that systematically generates the Hamiltonian cycles in $Q$ one by one, one could also use a fast heuristic algorithm that tries to
find a Hamiltonian cycle in $Q$, test each Hamiltonian cycle as it is found, and then repeat this process until either it finds one that passes the test or until a limit on the number of
tries is reached. In the latter case, we can assume with a reasonable probability that none of the Hamiltonian cycles will pass the test and then we can thus move on
to the next potential rotation from the set $\cR$. In this case (unless the algorithm succeeds with a different potential rotation of the same order),
no claim can be made about the exact value of the Hamiltonian compression factor of $\Gamma$ and only a lower bound on $\kappa(\Gamma)$ can be determined in the case where the algorithm subsequently succeeds with a potential rotation of smaller order.
 
 Our implementation of the algorithm used a combination of these approaches. For quotient graphs $Q$ that have at most a  prescribed number $M$ 
 of vertices of degree greater than $2$ (noting that vertices of degree $2$ can be suppressed when finding Hamiltonian cycles), we performed a brute-force search
 that determined all Hamiltonian cycles of $Q$. For quotients $Q$ having more than $M$ vertices of degree at least $3$, we used the implementation of a heuristic algorithm 
 that is available in {\tt nauty} package \cite{nauty} under the name {\tt hamheuristic.c}, which we ran a prescribed number $P$ times before giving up and moving on
 to the next potential rotation in $\cR$. Of course, one can experiment with different values of the parameters $M$ and $P$ to achieve best performance.
 In our test case presented in Section~\ref{sec:CET}, we set $M$ to be $114$, and $P$ to be $50$.
 
 Finally, let us comment briefly on the test given in Line~18 of the algorithm. Here
for a fixed  sequence $\bv_0\bv_1\ldots \bv_{m-1}$, we need to decide whether there exist shifts
$
    z_i\in \zeta(\bar v_i,\bar v_{i+1}), i\in\ZZ_m,
$
such that
the sum $\sum_{i=0}^{m-1}z_i$ (which can be computed modulo $k$) is coprime to $k$.
This condition should not be tested by enumerating all possible choices of the
shifts $z_i$, though, as that would clearly result in an exponential explosion of the sums that need to be tested.
 Instead, one should successively  compute the sets $R_0 = \{0\}$, $R_j$, $j=1,2, \ldots, m$, 
of all partial sums $r_{j-1}+z_{j-1} \in \ZZ_k$ with $r_{j-1} \in R_{j-1}$ and $z_{j-1} \in \zeta(\bar v_{j-1}\bar v_{j})$,
and for each element of $R_j$, store one predecessor from $r_{j-1} \in R_{j-1}$ from
which it was reached by adding an element  $z_{j-1} \in \zeta(\bar v_{j-1},\bar v_{j})$,
so that a successful choice of shifts can later be reconstructed by backtracking.

\section{Hamiltonian compression factors of cubic edge-transitive graphs}
\label{sec:CET}

We tested our algorithm on the classical database of highly symmetric
graphs, namely the extended version~\cite{ConPotFoster} (available
for download at \cite{ConderWebCAT,ConderWebCSS,FosterGraphSym})
of the Foster
census~\cite{Foster}. This census contains all cubic edge-transitive graphs on
at most \(10{,}000\) vertices. It consists of \(4858\) graphs, of which \(3815\)
are arc-transitive and \(1043\) are semisymmetric, that is, edge-transitive but
not vertex-transitive. The census is available online at~\cite{FosterGraphSym}
in \texttt{sparse6} format, as well as in a Magma-compatible format.

The fact that all
edge-transitive cubic graphs of order at most
\(10{,}000\), except for
 the two well-known non-Hamiltonian exceptions (the Petersen graph,
denoted in the census~\cite{FosterGraphSym} by \(\mathrm{CAT}(10,1)\), and the
Coxeter graph, denoted by \(\mathrm{CAT}(28,1)\)), are Hamiltonian
was established already in 2018 by the first-named author of this paper,
but no attempt at finding Hamiltonian cycles admitting the highest rotational symmetry
was made at that point. Here, we determine
a provably
correct value of the Hamiltonian compression factor, together with a
corresponding shortest LCF code, for all but \(98\) graphs.

Our implementation used a combination of the approaches described in
Section~\ref{sec:pseudo}. For quotients \(Q\) constructed in Line~7 of
Algorithm~\ref{alg:hcf} and having at most \(114\) vertices of degree greater than \(2\), we
exhaustively generated all Hamiltonian cycles of \(Q\) and tested which of them
lift to rotationally symmetric Hamiltonian cycles of the original cubic graph.
For the remaining quotients, we used the heuristic algorithm, which we ran
\(50\) times before giving up.

The initial computation, run on a single core of an M1 Apple Mac mini, took
approximately \(15\) hours and determined the exact value of the Hamiltonian
compression factor for all but \(109\) graphs. For a further \(11\) graphs, we
extended the range of quotients for which all Hamiltonian cycles were
enumerated, thereby confirming that the lower bound previously obtained was in
fact the exact value of the Hamiltonian compression factor. These additional
computations, run in parallel on the \(8\) CPU cores of the M1 Mac mini, took
approximately \(10\) hours. For the remaining \(98\) graphs, we strongly
believe that the lower bounds obtained are the exact values.
The Hamiltonian compression factors and the corresponding LCF codes obtained
in our computations are available for download from the census
webpage~\cite{FosterGraphSym} and from the accompanying Zenodo
repository~\cite{ZenodoLCF}. 

The data is given in two text files, one for the
arc-transitive graphs and one for the semisymmetric. In each of these two files,
each graph is represented by one line, such as:\\
\indent
$!\> 24\> 1\!\colon\> [-5,5,-9,7,-7,9]^4$
\\
More precisely, each line starts with one of the symbols `$!$', `$?$' or `$\#$':
the symbol `$!$' indicates that an optimal Hamiltonian cycle (and hence the exact value of
the Hamiltonian compression factor) was determined, the symbol `$?$' indicates that only a lower
bound on the compression factor was established, and the symbol `$\#$' means that the graph
is not Hamiltonian (recalling that there are only two such graphs in the database).
Then two integers follow, say $n$ and $i$, meaning that the ID of the graph in
the census~\cite{FosterGraphSym} is CAT$(n,i)$ in the arc-transitive case, or CSS$(n,i)$
in the semisymmetric case. After the separator `$\colon$', the actual LCF of the graph follows, where
the length of the sequence in the square brackets is the length $p$ of the period of the LCF code,
and the integer in the exponent is the compression factor of the corresponding Hamiltonian cycle.

In what follows, we present some statistical features of the Hamiltonian
compression factor for cubic edge-transitive graphs. For the purpose of this
statistical analysis, we treat the lower bounds obtained for the $98$ unresolved
arc-transitive graphs as the exact values of their compression factor, and we
assign the compression factor $0$ to the two non-Hamiltonian graphs (the Petersen
and Coxeter graphs). We consider the two families separately: the $3815$
arc-transitive graphs and the $1043$ semisymmetric graphs.

Let us first discuss the average and the median value for the Hamiltonian compression factors
of the graphs in the two datasets.
The mean value of $\kappa$ is approximately
$795$ for the arc-transitive graphs and $97$ for the semisymmetric ones, while the
corresponding medians are $117$ and $36$. The substantial gap between the mean and
the median in both cases is due to the fact that a relatively small number
of exceptionally compressible graphs (those admitting very short LCF periods) pulls
the average well above the typical value.

The global distribution of the compression factor is shown in
Figure~\ref{fig:scatter}, where each graph is represented by a single point whose
horizontal coordinate is its order $n$ and whose vertical coordinate is its
compression factor $\kappa$. Since $\kappa = n/p$ for a graph with shortest LCF
period $p$, the points line up along the rays
$\kappa = n/p$ emanating from the origin, one ray for each attainable period $p$.
(A few of these are indicated by dashed lines.) The arc-transitive graphs populate
the rays $p=1,2,3,\ldots$ densely, with a conspicuous concentration along the line
$\kappa = n/2$, whereas the semisymmetric graphs avoid the steepest rays
altogether: no point lies above the line $\kappa = n/6$.

\begin{figure}[ht]
\centering
\includegraphics[width=\textwidth]{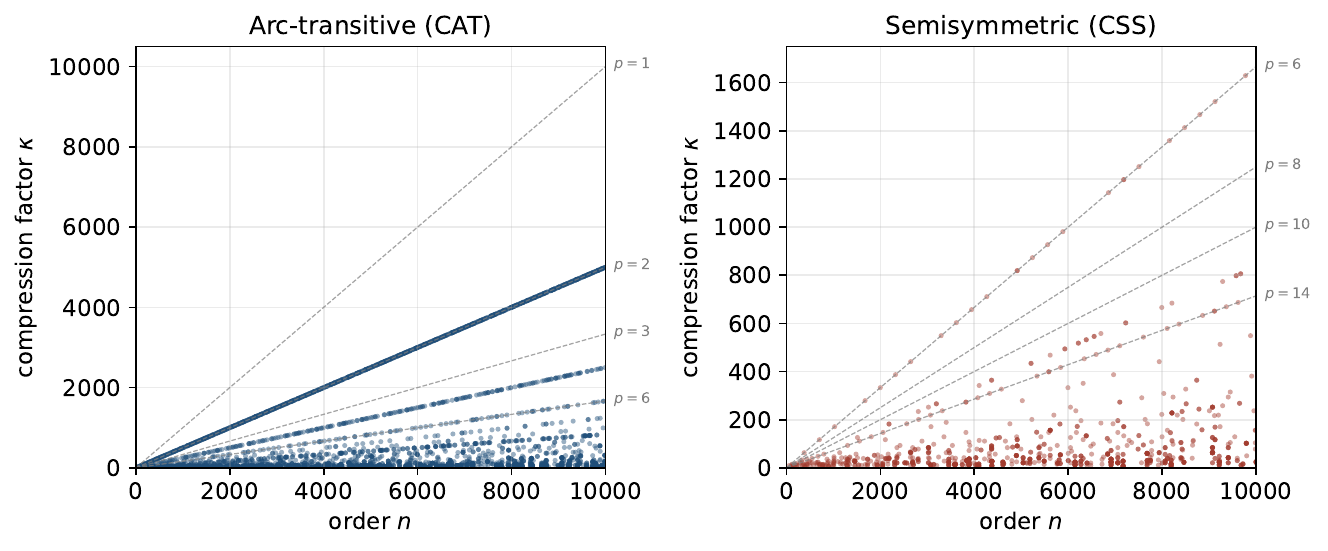}
\caption{The Hamiltonian compression factor $\kappa$ of each graph plotted against
its order $n$, for cubic arc-transitive graphs (left) and cubic semisymmetric
graphs (right). Each dot is one graph; the dashed rays are the lines
$\kappa = n/p$ for the indicated periods $p$. Note the different vertical scales,
and that no semisymmetric graph lies above the ray $\kappa = n/6$.}
\label{fig:scatter}
\end{figure}

Since $\kappa(\Gamma)$ is bounded above by the order $n$ of $\Gamma$, comparing the
Hamiltonian compression factor across graphs of different orders is more meaningful through the
{\em relative compression factor} $\kappa(\Gamma)/n$, which equals the reciprocal $1/p$ of the
period length $p$ of a shortest LCF code, and so lies in the interval $(0,1]$. A value
close to $1$ indicates an extremely compressible graph (having a short, highly repeated LCF
code), while a small value signals a long, nearly aperiodic code. Figure~\ref{fig:abs}
plots the average compression factor, and Figure~\ref{fig:rel} the average relative
compression factor, computed over all graphs of order at most $N$, as a function of
$N$. The near-linear growth of the average of $\kappa$ in Figure~\ref{fig:abs} is
to be expected and carries little information, since $\kappa$ is itself a quantity
that scales with the order $n$; the relative compression factor in
Figure~\ref{fig:rel} is the more meaningful one. There the average drops sharply
for small orders and then settles onto what appears to be an almost constant
plateau: approximately $0.16$ for arc-transitive graphs and approximately $0.02$
for semisymmetric graphs. Taken at face value, this would suggest that, across the
whole range up to order $10{,}000$, arc-transitive graphs are on average roughly
eight times more compressible than semisymmetric ones.

This last comparison should, however, be read with caution. The factor of eight is
the ratio of two {\em mean} relative compression factors, and, as already noted,
both distributions are strongly right-skewed, so the means are not representative
of a typical graph: the median relative compression factor is only about $0.04$ for
arc-transitive graphs, roughly a quarter of the corresponding mean, and is smaller
still for semisymmetric graphs. The ratio of the medians therefore differs appreciably
from the ratio of the means, and the qualitative conclusion---that arc-transitive
graphs are substantially more compressible---is more robust than the precise
numerical factor. We also stress that all of these statistics rest on the working
assumption stated above, namely that the lower bounds obtained for the $98$
unresolved arc-transitive graphs coincide with the true compression factors; should
any of these bounds be strict, the corresponding averages would only increase
slightly, and none of the qualitative conclusions would be affected.

\begin{figure}[ht!!!]
\centering
\includegraphics[width=\textwidth]{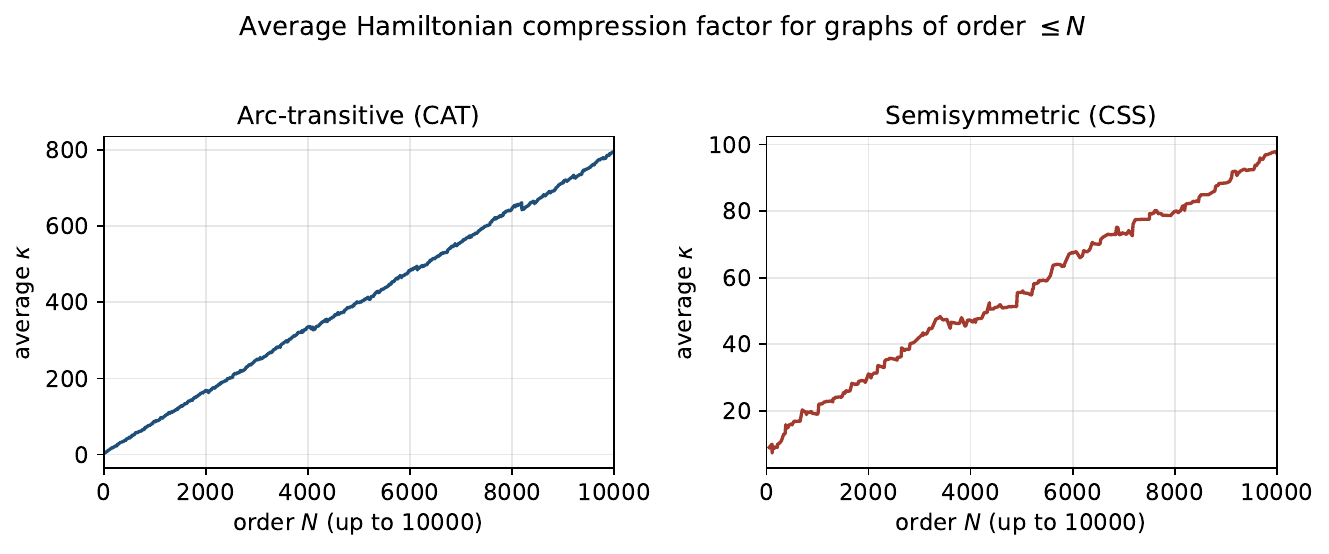}
\caption{The average Hamiltonian compression factor $\kappa$ over all edge-transitive cubic
graphs of order
at most $N$, plotted against $N$, for cubic arc-transitive graphs (left) and cubic
semisymmetric graphs (right).}
\label{fig:abs}
\end{figure}

\begin{figure}[ht!!!]
\centering
\includegraphics[width=\textwidth]{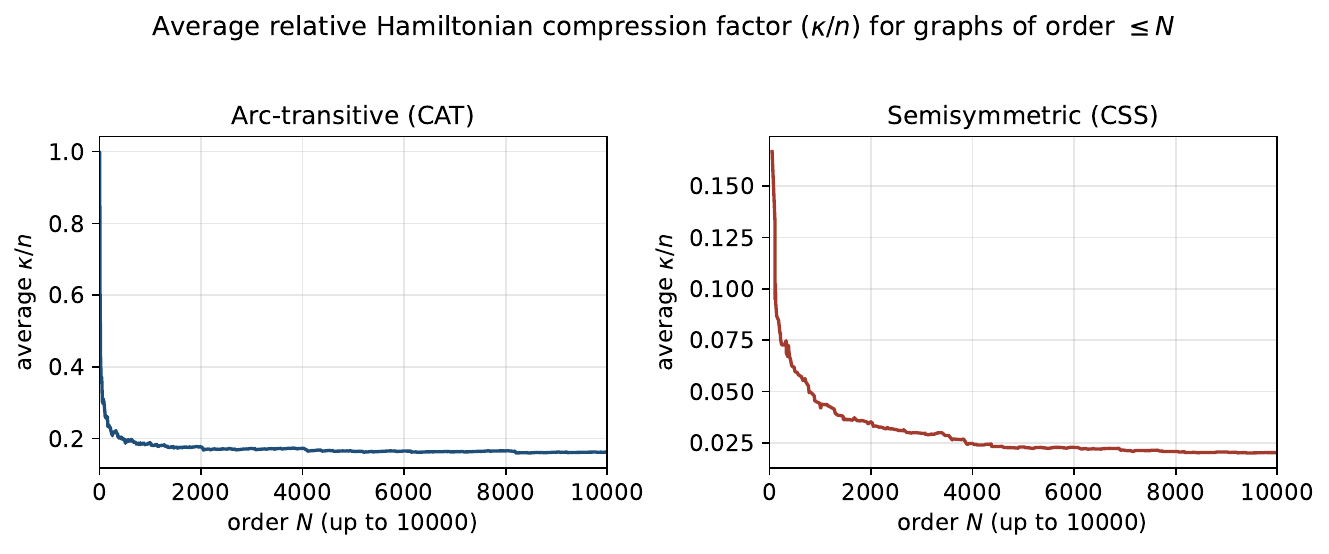}
\caption{The average relative Hamiltonian compression factor $\kappa/n$ over all edge-transitive cubic graphs
of order at most $N$, plotted against $N$, for cubic arc-transitive graphs (left) and
cubic semisymmetric graphs (right). In both families the average approaches an almost
constant value.}
\label{fig:rel}
\end{figure}

A further qualitative difference between the two families concerns the extremes of
compressibility. Among the arc-transitive graphs, $925$ of them (about $24\%$) admit an
LCF code of period at most $2$,  satisfying $\kappa(\Gamma)/n \ge 1/2$; two of
these, the complete graph $K_4 = \mathrm{CAT}(4,1)$ and the complete bipartite graph
$K_{3,3} = \mathrm{CAT}(6,1)$, even attain period $1$ and hence $\kappa(\Gamma)=n$. In
contrast, no semisymmetric graph in the census admits a period shorter than $6$: the
most compressible semisymmetric graphs satisfy $\kappa(\Gamma)/n = 1/6$, and
none exceeds that. This is precisely the empty region above the ray $\kappa = n/6$
visible in the right-hand panel of Figure~\ref{fig:scatter}. The lower
compressibility of semisymmetric graphs is hence not merely a
matter of averages, but holds even at the most favourable end of the spectrum.

\section{Concluding remarks}
\label{sec:conclusion}

We have presented an algorithm, justified by the cycle lifting lemma
(Lemma~\ref{lem:main}), which determines the Hamiltonian compression factor of a cubic
graph together with a correspondingly compressed LCF code, by exploiting the
semiregular automorphisms of the graph rather than enumerating Hamiltonian cycles
directly. Applied to the extended Foster census, the method determined a provably
optimal value of $\kappa(\Gamma)$ for all but $98$ of the $4858$ cubic edge-transitive
graphs on at most $10{,}000$ vertices, and a lower bound (believed to be exact) for the
remaining ones. The resulting LCF codes are made publicly available, both from the
census webpage~\cite{FosterGraphSym} and from the accompanying Zenodo
repository~\cite{ZenodoLCF}.

Several features of the data invite further study. First, every semisymmetric graph in
the census turned out to be Hamiltonian, and, perhaps more surprisingly, an optimal LCF
code was found for each of them; it would be interesting to know whether this remains
true beyond $10{,}000$ vertices. Second, the apparent stabilisation of the average
relative compression factor (Figure~\ref{fig:rel}) might suggest that this quantity 
converges to a limit as the order grows; either proving or disproving existence of such a limit
 for either family seems a natural question. Third, the marked distinction
between the two families (and in particular the empirical fact that no semisymmetric graph
in our range admits an LCF period shorter than $6$, whereas arc-transitive graphs
routinely admit periods $1$ or $2$) calls for a structural explanation. Finally, from
an algorithmic standpoint, the chief obstacle remains the determination of all
Hamiltonian cycles in the quotient graph for candidate rotations of small order; any
improvement here would directly extend the range in which the compression factor can be
computed exactly, and in particular might resolve the $98$ outstanding cases.
\bigskip

{\bf CRediT authorship contribution statement.}
\noindent
Marston Conder produced the census of arc-transitive cubic graphs of order up
to $10{,}000$ in 2011 and determined which of them admit a Hamiltonian cycle; in
2018, jointly with Primo\v{z} Poto\v{c}nik, he extended this census to
semisymmetric cubic graphs of the same order and showed that all of them are
Hamiltonian. He and Primo\v{z} Poto\v{c}nik also found compressed LCF codes for
many of the small graphs in these censuses (Resources, Investigation, Formal
analysis, Data curation). Primo\v{z} Poto\v{c}nik conceived and led the
theoretical part of the research, developing the underlying methodology and
carrying out the formal analysis, and he wrote the first draft of the
manuscript, in addition to supervising the project and overseeing its
administration (Conceptualization, Methodology, Formal analysis, Writing --
original draft, Supervision, Project administration). Gregor Poto\v{c}nik
implemented the algorithm in software, ran and managed the computations on the
cubic edge-transitive graphs, curated the resulting data, and validated the
computed compression factors and LCF codes (Software, Investigation, Data
curation, Validation).
\medskip

{\bf Acknowledgements.}
\noindent
We are grateful to Toma{\v z} Pisanski and Vlad Pelekhaty for asking us about determining LCF codes 
for graphs in the census of arc-transitive cubic graphs of small order, as well as for many discussions that
followed, and to Eric W.\ Weisstein for encouraging us to document our computation of these codes
in the format of a paper.

\end{document}